 \newtheorem{thm}{Theorem}[section]
 \newtheorem{lem}[thm]{Lemma}
 \theoremstyle{definition}
 \theoremstyle{remark}
 \numberwithin{equation}{section}
 \newcommand{\Real}{\mathbb{R}}
 \newcommand{\di}{\mathbf{div}}
 \newcommand{\tr}{\textbf{tr}}
 \newcommand{\dis}{\textbf{d}}
\newcommand{\ntorus}{\mathbb T^n}
\begin{document}

\title[On the JKO scheme]{On the Jordan-Kinderlehrer-Otto scheme}

\author{Paul W.Y. Lee}
\email{wylee@math.cuhk.edu.hk}
\address{Room 216, Lady Shaw Building, The Chinese University of Hong Kong, Shatin, Hong Kong}

\date{\today}

\begin{abstract}
In this paper, we prove that the Jordan-Kinderlehrer-Otto scheme for a family of linear parabolic equations on the flat torus converges uniformly in space.
\end{abstract}

\maketitle

\section{Introduction}

Diffusion equations including the heat equation, the Fokker-Planck equation, and the porous medium equation are gradient flows in the Wasserstein space, the space of all probability measures, with respect to the $L^2$-Wasserstein distance from the theory of optimal transportation. One way to make the above statement precise is to use a time-discretization scheme introduced in \cite{JoKiOt} which is now called the Jordan-Kinderlehrer-Otto (JKO) scheme. An interesting and insightful formal Riemannian structure of the $L^2$-Wasserstein distance was also found in \cite{Ot}. It was also shown in \cite{AmGiSa} that the above equations are examples of generalized gradient flows on abstract metric spaces (see \cite{AmGi} for a quick overview and \cite{Vi1,Vi2} for further details). There are also lots of recent activities in this area (see, for instance, \cite{BlCa,DeMeSaVe}).

We consider, in this paper, the JKO scheme for the equation
\[
\partial_t \phi_t=\Delta \phi_t+\left<\nabla\phi_t,\nabla\Psi\right>+f\phi_t
\]
on the flat torus $\ntorus$, assuming that we have a positive solution $v_0$ of the equation
\begin{equation}\label{v0}
\Delta v_0=\left<\nabla v_0,\nabla\Psi\right>+(\Delta\Psi -f)v_0,
\end{equation}
where $\Psi$ and $f$ are smooth functions on the flat torus $\ntorus$.

For this, let $\mu_0$ and $\mu_1$ be two Borel probability measure on $\ntorus$. The $L^2$ Wasserstein distance $\dis$ between $\mu_0$ and $\mu_1$ is defined as follows
\begin{equation}\label{OT}
\dis^2(\mu_0,\mu_1)=\inf_{\varphi_*\mu_0=\mu_1}\int_Md^2(x,\varphi(x))d\mu_0(x),
\end{equation}
where the infimum is taken over all Borel maps $\varphi:\ntorus\to \ntorus$ pushing $\mu_0$ forward to $\mu_1$. Minimizers of (\ref{OT}) are called optimal maps.

Let $dx^n$ be the Lebesgue measure on $\ntorus$, let $\mu=v_0dx^n$, let $\rho_0\mu$ be a probability measure $\mathbb T^n$, let $\rho_0^N=\rho_0$, and let $K>0$ be a constant. The following minimization problem has a unique solution $\rho_k^N$ for each $k=1,...,N$.
\begin{equation}\label{JKOmin}
\inf\left[\frac{1}{2}\dis^2(\rho_{k-1}^N\mu,\rho\mu)+\frac{K}{N}\int_{\ntorus}(\log \rho-\log v_0+\Psi) \rho d\mu\right],
\end{equation}
where the infimum is taken over the set of $L^1$ functions $\rho:\ntorus\to [0,\infty)$ satisfying $\int\rho d\mu=1$.

This defines, for each positive integer $N$, a sequence of functions $\{\rho_k^N|k=0,1,...\}$. This discrete scheme is the so-called JKO scheme.

Let $\phi_t^N:\left[0,K\right]\times \ntorus\to [0,\infty)$ be defined by
\[
\phi_t^{N}=\rho_{k}^N
\]
if $t$ is in $\left[\frac{kK}{N},\frac{(k+1)K}{N}\right)$ and $k=0,...,N-1$.

It follows as in \cite{JoKiOt} that a subsequence of $\{\phi_t^N| N=1,2,...\}$ converges in $L^1$ to the solution of the initial value problem
\begin{equation}\label{equa}
\partial_t \phi_t=\Delta \phi_t+\left<\nabla\phi_t,\nabla\Psi\right>+f\phi_t,\quad \phi_0=\rho_0.
\end{equation}

Note that a uniform $L^\infty$ bound for the sequence $\phi_t^N$ can be found in \cite{Sa}. In the case of the flat torus $\ntorus$, we show that this sequence has a uniform Lipschitz bound in space. More precisely,

\begin{thm}\label{main}
Assume that $\rho_0$ is in $C^4(\ntorus)$. Then there is a constant $C>0$ depending only on $\rho_0$, $v_0$, and $\Psi$ such that the sequence $\{\phi_t^N| N=1,2,...\}$ satisfies
\[
|\nabla\phi_t^N|\leq C
\]
for all $t$ in the interval $[0,K]$.

Moreover, this sequence converges in $C^{0,\alpha}(\ntorus)$, for any $0<\alpha< 1$, to the unique solution of the initial value problem (\ref{equa}) as $k\to\infty$.
\end{thm}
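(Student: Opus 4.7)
The plan is to establish the uniform spatial Lipschitz bound by a discrete Bernstein-type argument applied to the Euler--Lagrange equations of each JKO step, and then to deduce $C^{0,\alpha}$ convergence from Arzel\`a--Ascoli combined with $L^1$-compactness and uniqueness for (\ref{equa}). First, I would derive the first-order optimality condition for the minimizer $\rho_k^N$ of (\ref{JKOmin}). Writing $\tau = K/N$, standard Wasserstein calculus identifies the optimal transport map $T_k$ pushing $\rho_k^N\mu$ back to $\rho_{k-1}^N\mu$ as
\[
T_k(y) = y + \tau\, \nabla\bigl(\log\rho_k^N - \log v_0 + \Psi\bigr)(y),
\]
together with the Monge--Amp\`ere change of variables
\[
\rho_{k-1}^N(T_k(y))\, v_0(T_k(y))\, \det DT_k(y) = \rho_k^N(y)\, v_0(y).
\]
Taking the logarithm and expanding to first order in $\tau$ shows that this is a consistent discretization of (\ref{equa}), with second-order corrections controlled by the Hessians of $\log\rho_k^N$, $\Psi$, and $\log v_0$.

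Next, I would prove the Lipschitz bound by induction on $k$. Since $\ntorus$ is compact, $|\nabla\rho_k^N|^2$ attains its maximum at some point $y_k^*$ at which $\nabla|\nabla\rho_k^N|^2 = 0$ and $\Delta|\nabla\rho_k^N|^2 \le 0$. Differentiating the implicit Monge--Amp\`ere relation, evaluating at $y_k^*$, and invoking the uniform $L^\infty$ bound on $\rho_k^N$ from \cite{Sa} (with a matching positive lower bound obtained by the analogous argument applied to $1/\rho_k^N$), I expect to arrive at a recursive inequality of the form
\[
\max_{\ntorus}|\nabla\rho_k^N|^2 \le (1+C\tau)\max_{\ntorus}|\nabla\rho_{k-1}^N|^2 + C\tau,
\]
with $C$ depending only on the $C^2$-norms of $\Psi$ and $v_0$ and on the uniform bounds for $\rho_k^N$. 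Iterating over $k=1,\dots,N$ with $\tau = K/N$ yields a Gronwall-type estimate independent of $N$.

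For the convergence statement, the uniform $L^\infty$ and Lipschitz bounds make $\{\phi_t^N\}_N$ bounded in $W^{1,\infty}(\ntorus)$ uniformly in $t\in[0,K]$, hence precompact in $C^{0,\alpha}(\ntorus)$ for every $\alpha < 1$. The subsequential $L^1$-limit produced by the standard JKO argument recalled before the theorem must then also be a $C^{0,\alpha}$-limit of that subsequence; uniqueness of solutions to the linear parabolic equation (\ref{equa}) identifies this limit and upgrades subsequential convergence to convergence of the entire sequence.

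The hardest step will be the discrete Bernstein estimate itself: the Monge--Amp\`ere identity is fully nonlinear in the second derivatives of $\log\rho_k^N$, so one must expand it carefully to order $\tau^2$ and control the cross terms appearing when the identity is differentiated twice. Keeping every constant independent of $N$ and $k$ is delicate, and the flat-torus hypothesis is used essentially to eliminate curvature corrections in the optimal-transport geometry and to permit a clean expansion in $\tau$.
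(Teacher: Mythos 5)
Your outline follows the same general strategy as the paper: derive the Euler--Lagrange/Monge--Amp\`ere identity for each JKO step, run a maximum-principle (Bernstein-type) gradient estimate at each step, iterate the resulting one-step inequality over $k=1,\dots,N$ to get an $N$-independent Lipschitz bound, and then conclude by Arzel\`a--Ascoli plus uniqueness for the linear parabolic equation. The one-step recursion you predict is essentially the paper's estimate $(1-h\|\nabla^2(\Psi-2\log v_0)\|_\infty)\|\nabla F_k^N\|_\infty\le\|\nabla F_{k-1}^N\|_\infty$ (stated for $F=\log\rho-\log v_0+\Psi$ rather than for $\rho$ itself, which is equivalent once the two-sided $L^\infty$ bounds are in hand), and your treatment of the convergence step is a legitimate shortcut of the paper's weak-formulation argument.

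However, there is a genuine gap: you differentiate the Monge--Amp\`ere identity twice and evaluate at an interior maximum of $|\nabla\rho_k^N|^2$, but the minimizer of (\ref{JKOmin}) is a priori only an $L^1$ density whose logarithm is locally semi-convex; nothing in your argument justifies that $\rho_k^N$ is $C^2$ (let alone $C^4$), so the Bernstein computation is not licensed. The paper spends all of Section 3 on exactly this point: a continuity method in a parameter $s$, whose closedness requires a separate \emph{second-order} a priori estimate on $\lambda_k^N=\sup_{|w|=1}\nabla^2F_k^N(w,w)$ obtained from a quadratic recursive inequality (part (3) of Lemma \ref{est}), and this bound must itself be propagated through the iteration under the smallness condition $\tfrac{K}{N}\lambda_k^N\le\tfrac18$ --- which is where the restriction on $K$ and the real difficulty of the theorem live. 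Relatedly, your step ``$\Delta|\nabla\rho_k^N|^2\le 0$ at the max'' presumes an elliptic structure that the Monge--Amp\`ere identity only supplies after one knows $I+\tau\nabla^2F_k^N\ge0$ (from $c$-concavity of the optimal map) so that the trace term $-\tau\,\mathrm{tr}\bigl((I+\tau S)^{-1}S^2\bigr)$ has a definite sign; without recording that positivity, the key inequality in the gradient estimate does not close. You would need to add the regularity theorem and the propagated Hessian bound before the rest of your plan can be executed.
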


The rest of the sections are devoted to the proof of Theorem \ref{main}.

\section*{Acknowledgment}

The author would like to thank the referee for suggesting several important improvements and pointing out possible research directions.

\smallskip

\section{The optimal Transportation problem and the JKO Scheme}\label{OTP}

In this section, we recall basic results in the theory of optimal transportation and discuss the JKO scheme. Let $\ntorus$ be the n-dimensional torus equipped with the flat distance $d$. Let $\mu_0$ and $\mu_1$ be two Borel probability measures on $\ntorus$. Recall that the $L^2$ Wasserstein distance $\dis$ between $\mu_0$ and $\mu_1$ is the following minimization problem
\begin{equation}\label{OTre}
\dis^2(\mu_0,\mu_1)=\inf_{\varphi_*\mu_0=\mu_1}\int_Md^2(x,\varphi(x))d\mu_0(x),
\end{equation}
where the infimum is taken over all Borel maps $\varphi:\ntorus\to \ntorus$ pushing $\mu_0$ forward to $\mu_1$. Minimizers of (\ref{OTre}) are called optimal maps.

The minimization problem (\ref{OTre}) admits the following dual problem
\begin{equation}\label{dual}
\sup_{f(x)+g(y)\leq \frac{1}{2}d^2(x,y)}\left[\int_{\ntorus}fd\mu_0+\int_{\ntorus}gd\mu_1\right],
\end{equation}
where the supremum is taken over all pairs $(f,g)$ of continuous functions satisfying $f(x)+g(y)\leq\frac{1}{2}d^2(x,y)$ for all $x$ and $y$ in $\ntorus$.

The maximizers of the above problem are given by pairs of $c$-concave functions. If $f:\ntorus\to\Real$ is a continuous function, then the $c$-transform $f^c$ of $f$ is defined by
\[
f^c(x)=\inf_{y\in\ntorus}\left[\frac{1}{2}d^2(x,y)-f(y)\right].
\]
The function $f$ is $c$-concave if $f^{cc}=f$.

For the proof of following result (see, for instance, \cite{Br,Co,Mc}).
\begin{thm}\label{dualexist}
The infimum in (\ref{OT}) and the supremum in (\ref{dual}) coincide. Moreover, the supremum in (\ref{dual}) is achieved by a pair $(f,f^c)$, where $f$ is a $c$-concave function.
\end{thm}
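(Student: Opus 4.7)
The plan is to prove Kantorovich duality in three stages: weak duality, existence of a maximizing $c$-concave pair, and strong duality.

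For \emph{weak duality}, given any Borel map $\varphi$ pushing $\mu_0$ forward to $\mu_1$ and any admissible pair $(f,g)$, I would substitute $y=\varphi(x)$ into the constraint and integrate against $\mu_0$ to obtain $\int f\,d\mu_0+\int g\,d\mu_1 \le \int \tfrac12 d^2(x,\varphi(x))\,d\mu_0(x)$; taking the supremum over $(f,g)$ and the infimum over $\varphi$ shows that the supremum in (\ref{dual}) is at most the infimum in (\ref{OT}).

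For \emph{existence of a maximizer}, I would reduce to $c$-concave pairs via the $c$-transform: if $(f,g)$ is admissible then $f^c\ge g$ pointwise, so $(f,f^c)$ is admissible and has a larger objective; iterating once more, $(f^{cc},f^c)$ improves $f$ while remaining admissible, so one may restrict to pairs $(h,h^c)$ with $h=h^{cc}$. The key observation is that on the compact torus such pairs are equi-Lipschitz, since $h^c$ is an infimum of the functions $y\mapsto \tfrac12 d^2(x,y)-h(y)$ whose Lipschitz constants in $x$ are uniformly bounded by the diameter of $\ntorus$. Normalizing a maximizing sequence by adding a constant to $h_n$ and subtracting the same constant from $h_n^c$ (which preserves admissibility and the objective because $\mu_0$ and $\mu_1$ are probability measures), Arzel\`a--Ascoli delivers a uniform limit $h$, and continuity of the $c$-transform under uniform convergence ensures that $(h,h^c)$ attains the supremum.

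For \emph{strong duality}, the hardest step, I would pass through the Kantorovich relaxation to couplings. The relaxed infimum $\inf_{\pi\in\Pi(\mu_0,\mu_1)}\int \tfrac12 d^2\,d\pi$ is attained by weak-$\ast$ compactness of $\Pi(\mu_0,\mu_1)$ on the compact torus, and a Fenchel--Rockafellar or Hahn--Banach minimax argument identifies this infimum with the dual supremum in (\ref{dual}); alternatively one may bypass the minimax by applying the $c$-transform to the maximizing sequence produced above and verifying the complementary slackness identity $h(x)+h^c(y)=\tfrac12 d^2(x,y)$ on the support of any optimal coupling via $c$-cyclical monotonicity. To recover the Monge infimum (\ref{OT}) from the Kantorovich infimum, I would invoke Brenier's theorem on $\ntorus$, which applies when $\mu_0$ is absolutely continuous (the case of interest in the JKO iteration, since $\mu_0=\rho_{k-1}^N v_0\,dx^n$): the optimal coupling is then concentrated on the graph of a map induced by the gradient of the optimizing $c$-concave $h$. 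The main obstacle is this last step when $\mu_0$ fails to be absolutely continuous, where the Monge formulation may be vacuous; in that generality the content of the theorem is really the equality between the Kantorovich infimum and the dual supremum, together with the attainment by a $c$-concave pair.
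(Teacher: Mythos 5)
The paper offers no proof of Theorem \ref{dualexist}: it is quoted as a known result with a pointer to \cite{Br,Co,Mc}, so there is no in-paper argument to compare against. Your sketch is the standard textbook proof of Kantorovich duality (as in \cite{Vi1,Vi2}) and its steps are sound: weak duality by substituting $y=\varphi(x)$ into the constraint and integrating; reduction to $c$-concave pairs via the double $c$-transform, which preserves admissibility and only increases the objective; equi-Lipschitz bounds for $c$-transforms on the compact torus (the cost $\tfrac12 d^2$ is uniformly Lipschitz in each variable), a normalization by constants, and Arzel\`a--Ascoli, together with the fact that the $c$-transform is nonexpansive in the sup norm, to extract a maximizing $c$-concave pair; and strong duality either by Fenchel--Rockafellar or by $c$-cyclical monotonicity of the support of an optimal coupling. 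You are also right to isolate the one genuine subtlety: the infimum in (\ref{OT}) is over Monge maps, and its identification with the Kantorovich infimum (hence with the dual supremum) requires $\mu_0$ to be absolutely continuous --- as literally stated for arbitrary Borel probability measures the Monge infimum can exceed the dual supremum (e.g.\ when $\mu_0$ is a Dirac mass and $\mu_1$ is not, no admissible map exists). That hypothesis is exactly the setting of \cite{Br,Co,Mc} and holds throughout the JKO iteration here, where $\mu_0=\rho_{k-1}^N v_0\,dx^n$, so your proposal is correct in the regime in which the theorem is actually used, and it makes explicit a hypothesis the paper leaves implicit.
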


The following existence and uniqueness result can be found in \cite{Br,Mc}. Note that $c$-concave functions are locally semi-concave and hence twice differentiable almost everywhere (see \cite{EvGa}).

\begin{thm}\label{exuni}
Assume that the measure $\mu_0$ is absolutely continuous with respect to the Lebesgue measure $dx^n$. Then there is an optimal map $\varphi$ of (\ref{OT}) pushing $\mu_0$ forward to $\mu_1$ which is of the form
\begin{equation}\label{map}
\varphi(x)=x-\nabla f(x),
\end{equation}
where $f$ is a $c$-concave function. This map is unique up to a set of $\mu_0$ measure zero.

Moreover, if $\mu_1$ is also absolutely continuous with respect to $dx^n$, then the map
\[
\varphi^c(x):=x-\nabla f^c(x)
\]
is the optimal map pushing $\mu_1$ forward to $\mu_0$, where $f^c$ is the $c$-transform of $f$.
\end{thm}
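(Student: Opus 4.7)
The plan is to derive the optimal map from Kantorovich duality (Theorem \ref{dualexist}) together with the fact that $c$-concave functions are $\mu_0$-a.e.\ differentiable. First, I would let $f$ be a $c$-concave maximizer of (\ref{dual}) (whose existence is supplied by Theorem \ref{dualexist}) and let $\pi$ be an optimal transport plan, obtained by the standard weak compactness argument on the compact space $\ntorus \times \ntorus$. From the equality of primal and dual values, together with the inequality $f(x)+f^c(y) \leq \tfrac12 d^2(x,y)$ which holds everywhere, $\pi$ must be concentrated on the contact set
\[
\Gamma := \left\{(x,y)\in\ntorus\times\ntorus : f(x)+f^c(y) = \tfrac12 d^2(x,y)\right\}.
\]

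Next I would exploit semi-concavity. Since $f^c(x) = \inf_y[\tfrac12 d^2(x,y)-f(y)]$ is an infimum of uniformly semi-concave functions, both $f$ and $f^c$ are locally semi-concave on $\ntorus$, hence locally Lipschitz and differentiable Lebesgue-a.e.\ by Rademacher's theorem; the absolute continuity of $\mu_0$ transfers this to $\mu_0$-a.e.\ differentiability. Pick $x_0$ a point of differentiability of $f$ with $(x_0,y_0)\in\text{supp}(\pi)\subset\Gamma$. Then the function $x \mapsto \tfrac12 d^2(x,y_0) - f(x)$ attains its minimum at $x_0$. Representing $y_0$ by its lift in the injectivity neighborhood of $x_0$ (so that locally $d^2(x,y_0) = |x-y_0|^2$), the first-order optimality condition reads $(x_0-y_0) - \nabla f(x_0) = 0$, i.e., $y_0 = x_0 - \nabla f(x_0)$. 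Hence $\pi$ is concentrated on the graph of $\varphi(x) := x-\nabla f(x)$, which forces $\pi = (\operatorname{id}\times\varphi)_*\mu_0$, proving that $\varphi$ is the claimed optimal map. Uniqueness $\mu_0$-a.e.\ follows because any competing optimal map must also be supported in $\Gamma$ and thus satisfy the same pointwise identity at every point of differentiability of $f$.

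For the second assertion, assume $\mu_1$ is also absolutely continuous. The minimization (\ref{OTre}) is symmetric in the two marginals, so swapping the roles of $\mu_0$ and $\mu_1$ and repeating the argument with the $c$-concave function $f^c$ (whose $c$-transform is $f$, since $f^{cc}=f$) produces an optimal map $\widetilde{\varphi}$ from $\mu_1$ to $\mu_0$ of the form $\widetilde{\varphi}(y) = y - \nabla f^c(y)$, differentiability of $f^c$ being now guaranteed $\mu_1$-a.e. This identifies $\widetilde{\varphi} = \varphi^c$, as claimed.

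The main technical obstacle is the presence of the cut locus of $\ntorus$, where $d^2(\cdot,y_0)$ fails to be smooth and where the infimum defining $f^c$ may be attained at several points. To handle this I would argue that, for $\mu_0$-a.e.\ $x_0$, differentiability of $f$ at $x_0$ together with the semi-concavity estimate $\tfrac12 d^2(\cdot,y) - f \geq f^c(y)$ forces any minimizer $y$ to lie strictly inside the injectivity radius around $x_0$, so the local formula $d^2 = |x-y|^2$ legitimately applies when differentiating; this is the ingredient that converts the $c$-concavity of $f$ into the explicit representation (\ref{map}).
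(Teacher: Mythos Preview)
The paper does not give its own proof of Theorem~\ref{exuni}; it simply records the result and refers to \cite{Br,Mc} (and \cite{Co} for the periodic case). Your sketch is a correct outline of precisely the standard Brenier--McCann argument contained in those references: Kantorovich duality supplies a $c$-concave potential $f$, semi-concavity gives $\mu_0$-a.e.\ differentiability, the first-order condition on the contact set forces $y=x-\nabla f(x)$ and simultaneously rules out the cut locus (your last paragraph is the right idea---differentiability of $f$ at $x_0$ pins down a unique element of the superdifferential of $\tfrac12 d^2(\cdot,y_0)$, hence a unique closest lift of $y_0$), and the statement for $\varphi^c$ follows by symmetry using $f^{cc}=f$. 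So there is nothing to compare beyond noting that you have supplied what the paper outsources.
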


Next, let us fix a positive constant $h$ and a positive function $\rho_0$ satisfying $\int_{\ntorus}\rho_0d\mu=1$ and consider the following minimization problem
\begin{equation}\label{JKOminre}
\inf\left[\frac{1}{2}\dis^2(\rho_0\mu,\rho\mu)+h\int_{\ntorus}(\log \rho-\log v_0+\Psi) \rho d\mu\right],
\end{equation}
where the infimum is taken over the set of $L^1$ functions $\rho:\ntorus\to [0,\infty)$ satisfying $\int_{\ntorus}\rho d\mu=1$.

\begin{thm}\label{JKO}
There is a unique minimizer $\rho$ of (\ref{JKOminre}) which is locally semi-convex on the set
\[
\{x|\rho_0(x)>0\}.
\]
and satisfying the following
\begin{equation}\label{eqn}
\varphi_*(\rho\mu)=\rho_0\mu,
\end{equation}
where
\[
\varphi(x)=x+h\nabla F(x)
\]
is the optimal map which pushes forward $\rho\mu$ to $\rho_0\mu$ and
\[
F:=\log \rho-\log v_0+\Psi.
\]
Moreover, if there is a $C^2$ positive solution of (\ref{eqn}), then it coincides with $\rho$.
\end{thm}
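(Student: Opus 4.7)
My plan is to treat the four assertions — existence, uniqueness, the transport characterization (\ref{eqn}), and the $C^2$ uniqueness — in sequence. For \emph{existence}, I would apply the direct method on the compact torus: the $\dis^2$-term is weakly lower semi-continuous in $\rho\mu$, the relative entropy $\int\rho\log\rho\,d\mu$ is lower semi-continuous and bounded below, and the linear remainder $\int(\Psi - \log v_0)\rho\,d\mu$ is bounded and continuous. A minimizing sequence is automatically tight, so a weakly convergent subsequence yields a minimizer. For \emph{uniqueness}, I would invoke strict convexity of $\rho \mapsto \int\rho\log\rho\,d\mu$ under the linear interpolation of densities, combined with convexity of $\rho \mapsto \tfrac12\dis^2(\rho_0\mu,\rho\mu)$ under the same interpolation — this latter fact being immediate from the dual formulation of Theorem \ref{dualexist}, which realizes this quantity as a supremum of affine functionals of $\rho$.

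To derive (\ref{eqn}), I would perform a first-variation computation along $\rho_s = (1-s)\rho + s\sigma$ for a positive unit-mass density $\sigma$. The entropy part differentiates to $\int F(\sigma-\rho)\,d\mu$ with $F := \log\rho - \log v_0 + \Psi$, the $+1$ from $(\rho\log\rho)'$ being killed by $\int(\sigma-\rho)\,d\mu = 0$. For the Wasserstein part I would sandwich the right-derivative using the dual: if $(f,f^c)$ is the optimal Kantorovich pair for $(\rho_0\mu,\rho\mu)$ and $(f_s,f_s^c)$ that for $(\rho_0\mu,\rho_s\mu)$, admissibility of each pair against the other distance yields
\[
s\int f^c(\sigma-\rho)\,d\mu \le \tfrac12\dis^2(\rho_0\mu,\rho_s\mu) - \tfrac12\dis^2(\rho_0\mu,\rho\mu) \le s\int f_s^c(\sigma-\rho)\,d\mu,
\]
and uniform convergence $f_s^c \to f^c$ as $s \to 0^+$ on the compact torus pins the right-derivative to $\int f^c(\sigma-\rho)\,d\mu$. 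Minimality then gives $\int(f^c + hF)(\sigma-\rho)\,d\mu \ge 0$ for every admissible $\sigma$, which forces $f^c + hF$ to be constant $\mu$-a.e.\ on $\{\rho > 0\}$; Theorem \ref{exuni} identifies the optimal map sending $\rho\mu$ to $\rho_0\mu$ as $x - \nabla f^c(x) = x + h\nabla F(x)$, exactly as claimed.

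The \emph{local semi-convexity} of $\rho$ is inherited from the $c$-concavity of $f^c$. By Theorem \ref{exuni}, $f^c$ is locally semi-concave, so $-f^c = hF + \text{const}$ is locally semi-convex; subtracting the smooth piece $-\log v_0 + \Psi$ gives local semi-convexity of $\log\rho$, and exponentiation transfers this to $\rho$ on $\{\rho_0 > 0\}$, where the transport relation keeps $\rho$ bounded above and bounded away from zero. For the \emph{$C^2$ uniqueness}, a positive $C^2$ solution $\tilde\rho$ of (\ref{eqn}) yields a smooth map $\tilde\varphi = x + h\nabla\tilde F$ transporting $\tilde\rho\mu$ to $\rho_0\mu$; once $\tilde\varphi$ is known to be the $L^2$-optimal map, reading the Euler–Lagrange derivation in reverse shows $\tilde\rho$ is a critical point of the strictly convex functional in (\ref{JKOminre}), and hence equals $\rho$.

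\textbf{The main obstacle} I anticipate is precisely this last optimality verification: the identity $\tilde\varphi_*(\tilde\rho\mu) = \rho_0\mu$ alone does not place $\tilde\varphi$ in the Brenier class of gradients of $c$-concave functions. I would address this by verifying cyclical monotonicity of the graph of $\tilde\varphi$, or equivalently by showing that $-h\tilde F$ differs from a genuine $c$-concave function by a constant; the $C^2$ regularity of $\tilde F$ controls $D\tilde\varphi = \mathrm{Id} + hD^2\tilde F$ so that $\tilde\varphi$ remains injective with no conjugate points arising along its interpolation with the identity, allowing the comparison with the unique Brenier optimal map to go through.
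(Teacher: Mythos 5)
Your treatment of existence, uniqueness, the Euler--Lagrange identity (\ref{eqn}), and the local semi-convexity is essentially sound, and for (\ref{eqn}) you take a genuinely different (equally standard) route: you perturb linearly in the density and differentiate the Wasserstein term through the Kantorovich dual, whereas the paper perturbs by the flow of a vector field pushing $\rho\mu$ forward and integrates by parts, arriving directly at the gradient relation $h\nabla F=-\nabla f^c$. Both yield the same conclusion; the paper's route has the minor advantage of producing the Lipschitz bound on $\rho$ and the Monge--Amp\`ere equation (\ref{MAeqn}) along the way, without having to justify convergence of the dual potentials $f_s^c$.

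The genuine gap is in the last step, and it is exactly the obstacle you flagged without closing: your argument for the claim that a positive $C^2$ solution coincides with $\rho$ requires showing that $\tilde\varphi=\mathrm{id}+h\nabla\tilde F$ is the optimal map, i.e.\ that $-h\tilde F$ is $c$-concave. On the torus this is \emph{not} automatic from $\tilde\varphi_*(\tilde\rho\mu)=\rho_0\mu$ together with $C^2$ regularity: with no smallness hypothesis on $h$ or on $\nabla^2\tilde F$ in the statement, $\tilde\varphi$ need not be injective, need not satisfy $I+h\nabla^2\tilde F\ge 0$, and its graph need not be cyclically monotone, so the proposed comparison with the Brenier map does not go through. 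The paper avoids this issue entirely: it never proves that $\tilde\varphi$ is optimal. Instead it compares the two solutions of the Monge--Amp\`ere equation (\ref{MAeqn}) directly by a maximum principle: setting $g=\log\rho-\log\bar\rho$, the local semi-convexity of $\log\rho$ (already established from $c$-concavity of $f^c$) supplies a sequence of points $x_i$ of twice differentiability approaching the maximum of $g$ with $\nabla g(x_i)\to 0$ and $\nabla^2 g(x_i)\le\epsilon_i I$; substituting these into the determinant equation and using monotonicity of the determinant on the cone of matrices with $I+h\nabla^2F\ge 0$ yields $\rho\le\bar\rho$, and the common normalization $\int\rho v_0=\int\bar\rho v_0$ forces equality. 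You should replace your optimality-verification step with such a comparison argument, or else add explicit smallness hypotheses under which the $c$-concavity of $-h\tilde F$ can actually be verified.
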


\begin{proof}
By the convexity of the functional in (\ref{JKOminre}), any minimizing sequence of (\ref{JKOminre}) converges weakly in $L^1$ to a unique minimizer $\rho$. Following \cite{JoKiOt}, we let $\psi_s$ be the flow of a vector field $X$ and let $(\psi_s)_*(\rho\mu)=\sigma_s\mu$. It follows that
\begin{equation}\label{d1}
\begin{split}
&\frac{d}{ds}\Big[\frac{1}{2}\int_{\ntorus}d^2(x,\psi_s(\bar\varphi(x)))\rho_0(x)+h\log\sigma_s(\psi_s(x))\rho(x)\\
&+h(-\log v_0(\psi_s(x))+\Psi(\psi_s(x)))\rho(x)d\mu(x)\Big]\Big|_{s=0}=0,
\end{split}
\end{equation}
where $\bar\varphi$ is the optimal map pushing $\rho_0\mu$ forward to $\rho\mu$.

Since $\sigma_s(\psi_s)v_0(\psi_s)\det(d\psi_s)=\rho v_0$, we have
\begin{equation}\label{d2}
\begin{split}
&\frac{d}{ds}\int_{\ntorus}(\log\sigma_s-\log v_0+\Psi)(\psi_s(x))\, \rho(x) d\mu(x)\Big]\Big|_{s=0}\\
&\quad=\int_{\ntorus} [-\di(X(x))+\left<X(x),\nabla(\Psi-2\log v_0)(x)\right>] \rho(x) d\mu(x).
\end{split}
\end{equation}

Using standard arguments in \cite{Mc}, it is not hard to see that
\begin{equation}\label{disest}
\begin{split}
&\frac{d}{ds}\left[\frac{1}{2}\int_{\ntorus}d^2(x,\psi_s(\bar\varphi(x)))\rho_0(x)d\mu(x)\right]\Big|_{s=0}\\
&=\frac{1}{2}\int_{\ntorus}\left<\nabla d^2_x,X\right>_{\bar\varphi(x)}\rho_0(x)d\mu(x).
\end{split}
\end{equation}

By combining this with (\ref{d1}) and (\ref{d2}), we have
\begin{equation}\label{der}
\begin{split}
&\int_{\ntorus}\di(X)\rho d\mu\\
&=\int_{\ntorus}\left<\frac{1}{2h}\nabla d^2_{\bar\varphi^{-1}(x)}+\nabla(\Psi-2\log v_0),X\right>_x\rho(x)d\mu(x).
\end{split}
\end{equation}

It follows that $\rho$ is Lipschitz and
\[
-h\nabla\log\rho(x)+h\nabla\log v_0-h\nabla\Psi(x)=\frac{1}{2}\nabla d^2_{\bar\varphi^{-1}(x)}(x)
\]
holds $\rho\mu$ almost everywhere. Therefore,
\[
x+h\nabla F(x)=\bar\varphi^{-1}(x)
\]
$\rho\mu$ almost everywhere. (\ref{eqn}) follows from this and Theorem \ref{exuni}.

It also follows from the above discussion that
\[
t\mapsto x+th\nabla F(x)
\]
is the unique minimizing geodesic between its endpoints for $\rho\mu$ almost all $x$. Therefore,
\[
h\nabla F(x)=-\nabla f^c(x)
\]
for $\rho\mu$ almost all $x$.

Since $c$-concave functions are locally semi-concave, $\log\rho$ is locally semi-convex on the open set $\{x\in M|\rho(x)>0\}$. It follows that the following equation holds Lebesgue almost everywhere on the same set
\begin{equation}\label{MAeqn}
\rho_0(\varphi(x))v_0(\varphi(x))\det(d\varphi(x))=\rho(x)v_0(x)
\end{equation}
where $\varphi(x)=x+h\nabla F(x)$. The rest follows from a simple application of maximum principle.

Let $\bar\rho$ be a positive $C^2$ solution of (\ref{MAeqn}). Let $g=\log\rho-\log\bar\rho$ and let $x'$ be the maximum point of $g$. Since $g$ is locally semi-convex, there is a sequence of points $x_i$ where $g$ is twice differentiable and the followings hold
\[
\lim_{i\to\infty}x_i=x',\quad \lim_{i\to\infty}\nabla g(x_i)=0,\quad \nabla^2 g(x_i)\leq \epsilon_i I
\]
for some sequence $\epsilon_i\to 0$ as $i\to\infty$. It follows that
\[
\begin{split}
&\rho(x_i)v_0(x_i)=\rho_0(\varphi(x_i))v_0(\varphi(x_i))\det(I+h\nabla^2F(x_i))\\
&\leq \rho_0(\varphi(x_i))v_0(\varphi(x_i))\det((1-\epsilon_i)I+h\nabla^2(\log\bar\rho-\log v_0+\Psi)(x_i)).
\end{split}
\]

By letting $i\to\infty$ and using (\ref{MAeqn}), we obtain $\rho\leq \bar\rho$ everywhere. On the other hand, $\int\rho v_0=\int\bar\rho v_0$. Therefore, $\rho\equiv \bar\rho$.
\end{proof}

\smallskip

\section{Regularity of Minimizers}

In this section, we show $C^2$ regularity and positivity of the minimizers in (\ref{JKOmin}). More precisely, we have the following result.

\begin{thm}\label{regularity}
Let
\begin{equation}\label{lk}
\lambda:=\sup_{\{w\in T\ntorus|\, |w|=1\}}\nabla^2 F(w,w).
\end{equation}
Assume that $\rho_0$ is a positive $C^{r,\alpha}$ function  for some $r\geq 2$ and $\alpha>0$. Assume also that $h\lambda\leq\frac{1}{8}$. Then there is a constant $h_0>0$ depending on $\Psi$, $v_0$, and $\rho_0$ such that the minimizer of (\ref{JKOminre}) with $h<h_0$ is a $C^{r+2,\alpha}$ solution of (\ref{MAeqn}).
\end{thm}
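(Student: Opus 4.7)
The plan is to upgrade the a.e.\ Monge--Amp\`ere-type equation (\ref{MAeqn}) satisfied by the minimizer $\rho$ into a uniformly elliptic nonlinear PDE in $F$, and then bootstrap regularity by Schauder theory.

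\textbf{Step 1 (strict positivity of $\rho$).} Combining Theorem \ref{JKO} with the $L^\infty$ estimate of \cite{Sa} (applied to a single JKO step) supplies a uniform upper bound $\rho\leq C$. For the lower bound the heuristic is that when $h$ is small the optimal map $\varphi(x)=x+h\nabla F(x)$ is a small perturbation of the identity, so $\rho$ should remain close to $\rho_0>0$. Rewriting (\ref{MAeqn}) as
\[
\rho(x)=\rho_0(\varphi(x))\,\frac{v_0(\varphi(x))}{v_0(x)}\,\det(I+h\nabla^2 F(x)),
\]
one uses $h\lambda\leq\tfrac{1}{8}$ to bound the eigenvalues of $I+h\nabla^2 F$ above by $9/8$, and then applies a maximum-principle argument at a point where $\log\rho$ is almost minimal---this is legitimate thanks to the almost-everywhere twice differentiability from semi-convexity already used in Theorem \ref{JKO}---to conclude that, for $h<h_0$ sufficiently small, $\rho\geq c_0>0$, with $c_0$ depending only on $\rho_0$, $v_0$, and $\Psi$.

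\textbf{Step 2 (uniform ellipticity).} Once $\rho$ is pinched between positive constants, the right-hand side
\[
\det(I+h\nabla^2 F)=\frac{\rho\,v_0}{(\rho_0\circ\varphi)(v_0\circ\varphi)}
\]
is also pinched between positive constants. Together with the eigenvalue upper bound $9/8$ from $h\lambda\leq\tfrac{1}{8}$, every eigenvalue of $I+h\nabla^2 F$ then lies in an interval $[c,9/8]$ with $c>0$. Viewed as a fully nonlinear second-order equation in $F$, (\ref{MAeqn}) is therefore uniformly elliptic with $C^{r,\alpha}$ inhomogeneity coming from $\rho_0$, $v_0$, and $\Psi$.

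\textbf{Step 3 (bootstrap).} Starting from the Lipschitz and semi-convexity information on $F$ furnished by Theorem \ref{JKO}, I would invoke Caffarelli's interior regularity theory for Monge--Amp\`ere equations (or equivalently Evans--Krylov for uniformly elliptic concave fully nonlinear equations, which applies after taking logarithms) to obtain $F\in C^{2,\alpha}$. Differentiating the equation and iterating linear elliptic Schauder estimates then yields $F\in C^{r+2,\alpha}$, and hence $\rho\in C^{r+2,\alpha}$; compactness of $\ntorus$ allows local estimates to be patched without boundary complications. The principal obstacle is Step 1: without a positive lower bound on $\rho$ the Monge--Amp\`ere operator degenerates and neither Caffarelli's theory nor Schauder estimates apply, and this is precisely the role of the smallness condition $h<h_0$.
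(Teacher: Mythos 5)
Your overall strategy --- regularize the minimizer of Theorem \ref{JKO} directly by viewing (\ref{MAeqn}) as a uniformly elliptic Monge--Amp\`ere-type equation --- is genuinely different from the paper's, which never tries to upgrade the weak minimizer at all: it runs a continuity method on the family (\ref{MAinter}), interpolating from the explicitly solvable datum $Ce^{\log v_0-\Psi}$ at $s=0$ to $\rho_0$ at $s=1$, proves closedness via the a priori estimates of Lemma \ref{est}, openness via the implicit function theorem, and only then identifies the smooth solution with the minimizer through the comparison statement at the end of Theorem \ref{JKO}. The decisive gap in your version is Step 2. The bound $h\lambda\leq\frac18$ cannot be used as a free upper bound on the eigenvalues of $I+h\nabla^2F$ for the \emph{solution}: as the proofs of Theorem \ref{regularity} and Lemma \ref{limest} make clear, the quantity actually assumed small is $h\lambda_0$, the Hessian bound on the datum $F_0=\log\rho_0-\log v_0+\Psi$ (the $F$ in (\ref{lk}) is a notational slip), and indeed a hypothesis on $\sup\nabla^2F$ would be vacuous since $F$ is not yet known to be $C^2$. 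Producing the upper bound on $\nabla^2F$ is the heart of the matter: the paper obtains it from the Pogorelov-type estimate in Lemma \ref{est}(3), evaluated at the maximizer of $\nabla^2F(w,w)$ over the unit sphere bundle, which yields a quadratic inequality in $h\lambda$ whose discriminant is controlled precisely by $h\lambda_0\leq\frac18$ and $h<h_0$; a continuity-in-$s$ argument starting from $\lambda(0)=0$ then selects the bounded root. Without this, your two-sided eigenvalue pinching, and hence the uniform ellipticity needed for Evans--Krylov or for Caffarelli's $C^{2,\alpha}$ theory, is unsupported.

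Two further points would need attention even if Step 2 were repaired. First, your lower bound on $\rho$ in Step 1 applies a maximum principle ``at a point where $\log\rho$ is almost minimal,'' but Theorem \ref{JKO} only gives local semi-convexity of $\log\rho$ on the open set $\{\rho>0\}$, and (\ref{MAeqn}) only holds there; if $\rho$ vanished somewhere, $\log\rho$ would have no minimum and the argument would not close. The continuity method avoids this entirely because every $\bar\rho_s$ along the path is a positive $C^4$ function to which the a priori estimate of Lemma \ref{est}(1) applies classically. Second, the right-hand side of (\ref{MAeqn}) depends on the unknown both through $\rho(x)$ and through the composition with $\varphi=\mathrm{id}+h\nabla F$, so the bootstrap is not a single application of Schauder theory to a fixed inhomogeneity; one must track that each gain of regularity for $F$ improves the inhomogeneity by exactly one order, terminating at $C^{r+2,\alpha}$ because $\rho_0\in C^{r,\alpha}$. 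This iteration is plausible (and is essentially what ``standard elliptic theory'' does in the paper once uniform ellipticity is in hand), but it should be spelled out rather than asserted.
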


First, we prove the following a priori estimates for solutions of the equation (\ref{MAeqn}).

\begin{lem}\label{est}
Let $\rho$ be a $C^4$ positive solution of the equation (\ref{MAeqn}). If $I+h\nabla^2F\geq 0$, then
\begin{enumerate}
\item $(1+h||\nabla^2(\Psi-\log v_0)||_\infty)^n\sup_{\ntorus}(\rho_0v_0)\\
\geq\rho v_0\geq (1-h||\nabla^2(\Psi-\log v_0)||_{\infty})^n\inf_{\ntorus}(\rho_0v_0)$,
\item $(1-h||\nabla^2(\Psi-2\log v_0)||_{\infty})||\nabla F||_\infty\leq||\nabla F_0||_{\infty}$,
\item $0\leq h||\nabla^3(\Psi-2\log v_0)||_{\infty}||\nabla F||_{\infty}+\lambda_0\\
+(h^2||\nabla^3(\Psi-2\log v_0)||_\infty||\nabla F||_{\infty}\\
+2h||\nabla^2(\Psi-2\log v_0)||_{\infty}
+2h\lambda_0-1)\lambda\\
+(\frac{1}{3}h^3||\nabla^3(\Psi-2\log v_0)||_{\infty}||\nabla F||_{\infty}\\
+h^2||\nabla^2(\Psi-2\log v_0)||_{\infty}+h^2\lambda_0)\lambda^2$,
\end{enumerate}
where $|S|$ is the norm of the tensor $S$, $||S||_\infty=\sup_{\ntorus}|S|$,
\[
\lambda_0=\sup_{\{w\in T\ntorus|\, |w|=1\}}\nabla^2 F_0(w,w)
\]
and $F_0=\log \rho_0-\log v_0+\Psi$.
\end{lem}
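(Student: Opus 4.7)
The plan is to prove each of the three estimates by a maximum-principle argument on a carefully chosen test function, based on the logarithmic form of (\ref{MAeqn}). Setting $G:=\Psi-2\log v_0$, equation (\ref{MAeqn}) becomes the log-MA identity
\[
F(x)-G(x)=F_0(\varphi(x))-G(\varphi(x))+\log\det(I+h\nabla^2F(x))
\]
(equivalently, $\log(\rho v_0)(x)=\log(\rho_0 v_0)(\varphi(x))+\log\det(I+h\nabla^2F(x))$). This identity will be used as is for (1) with test function $\rho v_0$, and differentiated once (for (2), with test function $|\nabla F|^2/2$) and twice (for (3), with test function $(x,w)\mapsto\nabla^2F(x)(w,w)$ on the unit tangent bundle of $\ntorus$). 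In each case the first-order optimality condition at the maximum collapses the chain rule for $\varphi=x+h\nabla F$, and the second-order condition ensures that the derivatives of $\log\det(I+h\nabla^2F)$ contribute with the correct sign.

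For (1), at a maximum $x^*$ of $\rho v_0$ the inequality $\nabla^2\log(\rho v_0)(x^*)\leq 0$, combined with $\nabla^2F=\nabla^2\log(\rho v_0)+\nabla^2G$, gives $\nabla^2F(x^*)\leq \|\nabla^2 G\|_\infty I$; inserting into (\ref{MAeqn}) and using $\det(I+hM)\leq(1+h\|M\|_\infty)^n$ yields the upper bound, and the lower bound is symmetric at a minimum. For (2), let $x^*$ maximize $|\nabla F|^2/2$ and $e:=\nabla F(x^*)/|\nabla F(x^*)|$. The first-order condition $\nabla^2F(x^*)\nabla F(x^*)=0$ makes $e$ a null eigenvector of $\nabla^2F(x^*)$, hence $d\varphi(x^*)e=e$. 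Differentiating the log-MA identity in direction $e$, dotting with $e$, and estimating $G(x^*)-G(\varphi(x^*))$ by the fundamental theorem of calculus along the segment of length $h|\nabla F(x^*)|$ from $x^*$ to $\varphi(x^*)$ gives
\[
|\nabla F(x^*)|\leq\|\nabla F_0\|_\infty+h\|\nabla^2G\|_\infty|\nabla F(x^*)|+e\cdot\nabla\log\det(I+h\nabla^2F)(x^*).
\]
The last summand is nonpositive: in coordinates with $e=e_1$, the first-order condition also forces $F_{1k}(x^*)=0$, so $A:=(I+h\nabla^2F(x^*))^{-1}$ splits as $\mathrm{diag}(1,\tilde A)$ with $\tilde A\geq 0$, while the second-order condition $\nabla^2(|\nabla F|^2/2)(x^*)\leq 0$ gives $F_{111}(x^*)\leq 0$ and $(F_{ij1}(x^*))_{i,j\neq 1}\leq 0$ as a matrix. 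Consequently $h\,\mathrm{tr}(A\,\partial_e\nabla^2F)(x^*)\leq 0$, and (2) follows.

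For (3), choose $(x^*,w^*)\in\ntorus\times S^{n-1}$ to maximize $\nabla^2F(x)(w,w)$. Then $w^*$ is an eigenvector of $\nabla^2F(x^*)$ with eigenvalue $\lambda$ (so $d\varphi(x^*)w^*=(1+h\lambda)w^*$); the first-order condition in $x$ is $F_{w^*w^*j}(x^*)=0$ for every $j$ (hence $\partial^2_{w^*}\varphi(x^*)=0$); and the second-order condition in $x$ is $(F_{ijw^*w^*}(x^*))_{i,j}\leq 0$ as a matrix. Differentiating the log-MA identity twice in direction $w^*$ at $x^*$ yields
\[
\lambda=(1+h\lambda)^2\nabla^2F_0(\varphi(x^*))(w^*,w^*)+\partial^2_{w^*}[G-G\circ\varphi](x^*)+\partial^2_{w^*}\log\det(I+h\nabla^2F)(x^*).
\]
The first summand is bounded above by $(1+h\lambda)^2\lambda_0=\lambda_0+2h\lambda_0\lambda+h^2\lambda_0\lambda^2$. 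The second is analysed by Taylor's theorem with integral remainder for $\nabla^2G$ along the segment from $x^*$ to $\varphi(x^*)$ which, once combined with the factor $(1+h\lambda)^2-1$ coming from $d\varphi(x^*)w^*=(1+h\lambda)w^*$, produces the $\|\nabla^3G\|_\infty\|\nabla F\|_\infty$ and $\|\nabla^2G\|_\infty$ contributions with the precise coefficients $h,h^2,\tfrac13 h^3$ and $2h,h^2$ listed in the statement. For the log-determinant I use
\[
\partial^2_{w^*}\log\det(I+h\nabla^2F)=h\,\mathrm{tr}(A\,\partial^2_{w^*}\nabla^2F)-h^2\,\mathrm{tr}\bigl((A\,\partial_{w^*}\nabla^2F)^2\bigr);
\]
with $A\geq 0$, the quadratic piece is $\leq 0$ because $\mathrm{tr}(BABA)\geq 0$ whenever $A\geq 0$ and $B$ is symmetric, and the linear piece is $\leq 0$ by the second-order max condition combined with $A\geq 0$. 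Rearranging produces the quadratic inequality in $\lambda$.

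The main obstacle will be the quantitative bookkeeping in (3): to recover exactly the coefficients displayed in the statement, one must track carefully how the chain-rule factors $(1+h\lambda)^k$ for $k=1,2$ distribute across the successive integral-remainder terms of the Taylor expansion of $G\circ\varphi$, and verify that every remainder left over is either of the form already listed or absorbed into a nonpositive log-determinant contribution via the second-order maximum condition. By contrast, once the right test functions are identified and the observation that second-order maximality makes the log-determinant derivatives nonpositive is in place, (1) and (2) reduce to essentially routine computations.
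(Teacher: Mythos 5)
Your proposal takes essentially the same route as the paper: (1) by the maximum principle applied directly to (\ref{MAeqn}); (2) by differentiating the logarithm of (\ref{MAeqn}) once and evaluating at a maximum of $|\nabla F|^2$; (3) by differentiating twice at a maximizer of $\nabla^2 F(w,w)$ over the unit tangent bundle, with the first-order conditions collapsing the chain rule for $\varphi$ and the second-order conditions together with $(I+h\nabla^2F)^{-1}\geq 0$ making the log-determinant derivatives nonpositive --- this is exactly the paper's mechanism, and your sign analysis of the $\log\det$ terms is correct. Two minor discrepancies: in (1) your test function $\rho v_0$ produces the constant $\|\nabla^2(\Psi-2\log v_0)\|_\infty$ rather than the stated $\|\nabla^2(\Psi-\log v_0)\|_\infty$ (the two are not comparable in general, so you do not literally recover item (1), though your version is equally serviceable where the lemma is used); and in (3) your splitting of the $G-G\circ\varphi$ term gives slightly sharper coefficients than those displayed --- the paper instead integrates $\tfrac{d}{dt}\nabla^2G(\Phi_t)(d\Phi_t w,d\Phi_t w)$ along $\Phi_t(x)=tx+(1-t)\varphi(x)$, which is what produces the extra factor $1+h\lambda+\tfrac13 h^2\lambda^2$ on the $\|\nabla^3(\Psi-2\log v_0)\|_\infty\|\nabla F\|_\infty$ term --- but since $\lambda\geq 0$ your sharper bound implies the stated inequality, so this is harmless.
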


\begin{proof}
The first assertion follows immediately from (\ref{MAeqn}) and the maximum principle.

For each fixed $x$ in the torus $\mathbb T^n$ and each vector $w$ in $\Real^n$, we let $\gamma(s)=x+sw$ and let $\varphi(x)=x+h\nabla F(x)$. It follows from (\ref{MAeqn}) that
\[
\begin{split}
&\log\rho_0(\varphi(\gamma(s)))+\log v_0(\varphi(\gamma(s)))+\log\det(I+h\nabla^2 F(\gamma(s)))\\
&=\log\rho(\gamma(s))+\log v_0(\gamma(s)).
\end{split}
\]
Note that the $v_0$ term in the above equation appears with a plus sign and in $F$ with a minus sign.

By differentiating this equation with respect to $s$, we obtain
\begin{equation}\label{D1}
\begin{split}
&\left<\nabla(\log\rho+\log v_0)(\gamma(s)),w\right>\\
&=\left<\nabla(\log\rho_0+\log v_0)(\varphi(\gamma(s))),(d\varphi)_{\gamma(s)}(w)\right>\\
&+h\tr((I+hS(s))^{-1}S'(s)),
\end{split}
\end{equation}
where $S(s)$ is the matrix defined by $S_{ij}(s)=\nabla^2F(\gamma(s))(\partial_{x_i},\partial_{x_j})$.

If $x$ is a point where $y\mapsto |\nabla F(y)|^2$ achieves its maximum, then $\nabla^2 F(x)(\nabla F(x))=0$ and $\nabla^3 F(x)(\nabla F(x),v,v)\leq -|\nabla^2 F(x)(v)|^2$ for any vector $v$.

By combining this with (\ref{D1}), we obtain
\[
\begin{split}
&\left<\nabla(\log\rho+\log v_0)(x),\nabla F(x)\right>-\left<\nabla(\log\rho_0+\log v_0)(\varphi(x)), d\varphi_x(\nabla F(x))\right>\\
&\leq-h\tr((I+hS(0))^{-1}S(0)^2)\leq 0.
\end{split}
\]

The above inequality together with $\nabla^2 F(x)(\nabla F(x))=0$ gives
\[
\begin{split}
&|\nabla F(x)|^2-\left<\nabla(\log\rho_0-\log v_0+\Psi)(\varphi(x)),d\varphi_x(\nabla F(x))\right>\\
&\leq \left<\nabla(-2\log v_0+\Psi)(x),\nabla F(x)\right>-\left<\nabla(-2\log v_0+\Psi)(\varphi(x)),d\varphi_x(\nabla F(x))\right>\\
&\leq h||\nabla^2(-2\log v_0+\Psi)||_\infty|\nabla F(x)|^2+h||\nabla(-2\log v_0+\Psi)||_\infty||\nabla^2F(x) (\nabla F(x))||\\
&= h||\nabla^2(-2\log v_0+\Psi)||_\infty|\nabla F(x)|^2.
\end{split}
\]

Therefore, by using $d\varphi_x(\nabla F(x))=\nabla F(x)+h\nabla^2 F(x)(\nabla F(x))=\nabla F(x)$, we obtain
\[
\begin{split}
&(1-h||\nabla^2(-2\log v_0+\Psi)||_\infty)|\nabla F(x)|^2\\
&\leq\left<\nabla(\log\rho_0-\log v_0+\Psi)(\varphi(x)),d\varphi_x(\nabla F(x))\right>\\
&\leq ||\nabla F_0||_\infty\,|\nabla F(x)|
\end{split}
\]
and the second assertion follows.

By differentiating (\ref{D1}), we obtain
\begin{equation}\label{D2}
\begin{split}
&\nabla^2(\log\rho+\log v_0)(x)(w,w)\\
&=\nabla^2(\log\rho_0+\log v_0)(\varphi(x))(d\varphi(w),d\varphi(w))\\
&+h\nabla^3F(x)(w,w,\nabla(\log\rho_0+\log v_0)(\varphi(x)))\\
&-h^2\tr(((I+hS(0))^{-1}S'(0))^2)+h\tr((I+hS(0))^{-1}S''(0)).
\end{split}
\end{equation}

If $(x,w)$ achieves the supremum in (\ref{lk}), then
\[
\begin{split}
&\nabla^3F(x)(w,w,v)=0,\\
&\left<S''(0)v,v\right>=\nabla^4F(x)(w,w,v,v)\leq 0
\end{split}
\]
for any vector $v$.

Therefore, by combining this with (\ref{D2}), we obtain
\[
\nabla^2(\log\rho+\log v_0)(x)(w,w)\leq\nabla^2(\log\rho_0+\log v_0)(\varphi(x))(d\varphi(w),d\varphi(w)).
\]
Since $d\varphi(w)=w+h\nabla^2F(x)(w)=w+h\lambda w$, it also follows that
\[
\begin{split}
&\lambda-(1+h\lambda)^2\lambda_0\\
&=\nabla^2(\Psi-2\log v_0)(x)(w,w)-\nabla^2(\Psi-2\log v_0)(\varphi(x))(d\varphi(w),d\varphi(w))\\
&\leq\int_0^1\frac{d}{dt}\nabla^2(\Psi-2\log v_0)(\Phi_t(x))(d\Phi_t(x)(w),d\Phi_t(x)(w))dt\\
&\leq h||\nabla^3(\Psi-2\log v_0)||_\infty||\nabla F||_\infty\left(1+h\lambda+\frac{1}{3}h^2\lambda^2\right)\\
 &+h||\nabla^2(\Psi-2\log v_0)||_\infty(2\lambda+h\lambda^2),
\end{split}
\]
where $\Phi_t(x)=tx+(1-t)\varphi(x)$.

The last assertion follows from this.
\end{proof}

\begin{proof}[Proof of Theorem \ref{regularity}]
Let $C$ be a constant such that
\[
\int_MCe^{\log v_0-\Psi}d\mu=1.
\]
First, if $\rho_0=Ce^{\log v_0-\Psi}$, then $\rho= Ce^{\log v_0-\Psi}$ is a smooth positive solution of (\ref{MAeqn}). For more general $\rho_0$, let us consider the following family of equations
\begin{equation}\label{MAinter}
\begin{split}
&\det(I+h\nabla^2F(x))=\frac{\rho(x)v_0(x)}{((Ce^{\log v_0-\Psi})^{1-s}(\rho_0)^sv_0)(x+h\nabla F(x))}.
\end{split}
\end{equation}

Let $s_0$ be the supremum over the set of all $s$ in $[0,1]$ for which (\ref{MAinter}) has a $C^4$ solution $\bar\rho_s$. By Theorem \ref{JKO}, $\bar\rho_s$ is a minimizer of (\ref{JKOminre}) with $\rho_0$ replaced by $(Ce^{\log v_0-\Psi})^{1-s}(\rho_0)^s$. It follows that $I+h\nabla^2(\log\bar\rho_s-\log v_0+\Psi)\geq 0$. Let us choose $h_0$ such that $1-h_0||\nabla^2(\Psi-2\log v_0)||_{\infty}>0$. Then, by the second statement of Lemma \ref{est}, the set of solutions $\{\log\bar\rho_s|0\leq s < s_0\}$ has a uniform $C^1$ bound depending on $\rho_0$.

Let $\lambda(s)=\sup_{x\in \ntorus, |w|=1}\nabla^2(\log\bar\rho_s-\log v_0+\Psi)(x)(w,w)$. Since
\[
\begin{split}
&\sup_{x\in \ntorus, |w|=1}\nabla^2[\log((Ce^{\log v_0-\Psi})^{1-s}(\rho_0)^s)-\log v_0+\Psi](x)(w,w)\\
&=s\sup_{x\in \ntorus, |w|=1}\nabla^2(\log\rho_0-\log v_0+\Psi)(x)(w,w)\\
&=s\lambda_0<\lambda_0,
\end{split}
\]
we can apply the third statement of Lemma \ref{est} and conclude that there are positive constants $h_0$ and $C$ depending only on $\rho_0$, $v_0$, and $\Psi$ such that
\[
\begin{split}
&0\leq h^2C+h\lambda_0+\left(hC+2h\lambda_0-1\right)h\lambda(s)+\left(hC+h\lambda_0\right)(h\lambda(s))^2\\
\end{split}
\]
for all $h<h_0$.

By assumption, we have $h\lambda_0\leq\frac{1}{8}$. Therefore, by choosing a smaller $h_0$,
\[
h^2C+h\lambda_0+\left(hC+2h\lambda_0-1\right)x+\left(hC+h\lambda_0\right)x^2
\]
has real root.

Since $\lambda(0)=0$, $\lambda(s)$ has a upper bounded independent of $s$.  Therefore, $\lambda(s)$ is bounded above independent of $s$ (again by assuming $h_0$ small enough).

This, together with (\ref{MAinter}), also gives a uniform positive lower bound of $I+h\nabla^2(\log\bar\rho_s-\log v_0+\Psi)$. Higher order estimates follow from standard elliptic theory \cite{GiTr}. Therefore, there is a solution $\bar\rho_{s_0}$ of (\ref{MAinter}) with $s=s_0$. By  elliptic theory and the implicit function theorem, we must have $s_0=1$.
\end{proof}

\smallskip

\section{$C^{0,\alpha}$-Convergence of the JKO Scheme}

In this section, we show the $C^{0,\alpha}$-convergence of the JKO scheme. For each fixed positive integer $N$, let $h=\frac{K}{N}$, where $K$ is a positive constant to be fixed later which depends only on $\rho_0$, $v_0$, $\Psi$, and dimension. Let $\rho_0:\mathbb T^n\to (0,\infty)$ be a smooth function. Then (\ref{JKOmin}) defines a sequence of functions $\rho_0^N:=\rho_0, \rho_1^N,...,\rho_N^N$. Let $F_k^N:=\log\rho_k^N-\log v_0+\Psi$. Then the followings are consequences of Lemma \ref{est}.

\begin{lem}\label{limest}
There are positive constants $C$ and $K$ depending only on $\rho_0$, $v_0$, $\Psi$, and dimension such that
\begin{enumerate}
\item $\frac{1}{C}\leq \rho_k^N\leq C$,
\item $||\nabla F_k^N||_\infty\leq C$,
\item $\frac{K\lambda_k^N}{N}\leq\frac{1}{8}$,
\end{enumerate}
for all positive integer $N$ and all $k=1,...,N$.
\end{lem}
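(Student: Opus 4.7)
\textbf{Proof Plan for Lemma \ref{limest}.} I will prove all three assertions simultaneously by induction on $k$, applying Lemma \ref{est} at each JKO step with $\rho_{k-1}^N$ playing the role of the input ``$\rho_0$'' and $h=K/N$ playing the role of the step size. At each step, Theorem \ref{regularity} together with the induction hypothesis (3) at level $k-1$ (which gives $h\lambda_{k-1}^N\leq \tfrac{1}{8}$) ensures that $\rho_k^N$ is $C^{4}$, and the semidefiniteness $I+h\nabla^2 F_k^N\geq 0$ holds automatically as the Monge--Amp\`ere condition attached to the optimal map in Theorem \ref{JKO}, so Lemma \ref{est} applies.

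For (1), Lemma \ref{est}(1) yields the two-sided multiplicative recursion
\[
(1-hM_1)^n\inf(\rho_{k-1}^Nv_0)\;\leq\;\rho_k^Nv_0\;\leq\;(1+hM_1)^n\sup(\rho_{k-1}^Nv_0),
\]
where $M_1=\|\nabla^2(\Psi-\log v_0)\|_\infty$. Iterating at most $N$ times and using $(1\pm KM_1/N)^{nN}\leq e^{\pm 2nKM_1}$ (valid once $KM_1<N$, which follows from choosing $K$ small) gives a uniform two-sided bound $1/C\leq\rho_k^N\leq C$ depending only on $\rho_0,v_0,\Psi,K$. For (2), Lemma \ref{est}(2) telescopes to
\[
\|\nabla F_k^N\|_\infty\;\leq\;(1-hM_2)^{-k}\|\nabla F_0\|_\infty\;\leq\;e^{2KM_2}\|\nabla F_0\|_\infty,
\]
with $M_2=\|\nabla^2(\Psi-2\log v_0)\|_\infty$, again for $K$ small enough that $hM_2\leq 1/2$.

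The main obstacle is (3). Using the uniform gradient bound $B:=e^{2KM_2}\|\nabla F_0\|_\infty$ from (2), Lemma \ref{est}(3) becomes, with $\lambda_k^N$ playing the role of ``$\lambda$'' and $\lambda_{k-1}^N$ the role of ``$\lambda_0$'',
\[
(1-2h\lambda_{k-1}^N-2hM_2-h^2M_3B)\,\lambda_k^N \;\leq\; \lambda_{k-1}^N+hM_3B+\bigl(\tfrac{1}{3}h^3M_3B+h^2M_2+h^2\lambda_{k-1}^N\bigr)(\lambda_k^N)^2,
\]
with $M_3=\|\nabla^3(\Psi-2\log v_0)\|_\infty$. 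I will run a bootstrap: \emph{assuming a priori} $h\lambda_k^N\leq \tfrac{1}{8}$, the quadratic term on the right is $O(h)$, and using the inductive hypothesis $h\lambda_{k-1}^N\leq \tfrac{1}{8}$ the coefficient on the left exceeds $\tfrac{1}{2}$ for $K$ small. The inequality then reduces to a linear discrete Gr\"onwall relation $\lambda_k^N\leq(1+c_1h)\lambda_{k-1}^N+c_2h$ for constants $c_1,c_2$ depending only on $\rho_0,v_0,\Psi,n$. Summing over $k\leq N$ steps yields
\[
\lambda_k^N\;\leq\;e^{c_1K}\bigl(\lambda_0+c_2/c_1\bigr)\;=:\;\Lambda,
\]
uniformly in $k$ and $N$. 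Finally I choose $K$ small enough that $K\Lambda\leq \tfrac{1}{8}$; then $h\lambda_k^N=(K/N)\lambda_k^N\leq K\Lambda\leq \tfrac{1}{8}$, which both closes the induction and recovers the bootstrap assumption. The delicate point is the coordinated smallness of $K$: it must be small enough to control the Gr\"onwall factors in (1) and (2), to make the coefficient $1-2h\lambda_{k-1}^N-2hM_2-h^2M_3B$ uniformly positive, and finally to enforce $K\Lambda\leq \tfrac{1}{8}$ once $\Lambda$ is known in terms of the data.
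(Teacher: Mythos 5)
Your induction structure and your treatment of assertions (1) and (2) match the paper: iterate Lemma \ref{est}(1) and (2) over at most $N$ steps of size $h=K/N$ and absorb the products $(1\pm hM)^{nN}$ into exponentials, with $K$ chosen small. The problems are in assertion (3), and they are genuine.

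First, your bootstrap is circular in a way that cannot be closed in a discrete setting. The inequality of Lemma \ref{est}(3) is a quadratic in $\lambda_k^N$ with positive leading coefficient, so it only says that $h\lambda_k^N$ lies either below the smaller root or above the larger root; assuming $h\lambda_k^N\leq\tfrac18$ a priori is exactly the assumption that you are on the lower branch, and there is no continuity in the discrete index $k$ to rule out the upper branch. The paper resolves this by importing the continuity method from the proof of Theorem \ref{regularity}: along the continuation parameter $s$ one has $\lambda(0)=0$ and $\lambda(s)$ continuous, so $\lambda(s)$ cannot jump across the gap between the two roots, which is what legitimizes the bound by the smaller root. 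Your proposal never invokes this (or any substitute), so the branch selection is unjustified.

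Second, and more quantitatively, the claimed reduction to $\lambda_k^N\leq(1+c_1h)\lambda_{k-1}^N+c_2h$ with $c_1,c_2$ independent of $h$ and $N$ does not follow from the a priori information you allow yourself. With only $h\lambda_{k-1}^N\leq\tfrac18$ and $h\lambda_k^N\leq\tfrac18$, the term $2h\lambda_{k-1}^N\lambda_k^N$ on the left costs an $O(1)$ fraction (up to $\tfrac14$) of the coefficient of $\lambda_k^N$, and the quadratic term $h^2\lambda_{k-1}^N(\lambda_k^N)^2$ is likewise only $O(1)\cdot\lambda_k^N$ (or $O(1)\cdot\lambda_{k-1}^N$), not $O(h)\cdot\lambda$; your ``$O(h)$'' claim is false. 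The per-step multiplicative factor is then bounded away from $1$ independently of $h$, and iterating $N$ times produces a bound of the form $(\mathrm{const}>1)^N$, which diverges as $N\to\infty$. The paper avoids this by solving the quadratic exactly for its smaller root, obtaining $h\lambda_k^N\leq\frac{h^2C+h\lambda_{k-1}^N}{1-2hC-3h\lambda_{k-1}^N}$, and then verifying an exact composition identity for this fractional--linear recursion, so that after $m\leq N$ steps the accumulated denominator is $1-2mhC-3mh\lambda_0\geq1-2KC-3K\lambda_0$; the total loss is controlled by $K$ alone rather than by $N$ independent per-step losses. Your final step (choose $K$ with $K\Lambda\leq\tfrac18$) agrees with the paper's, but the constant $\Lambda$ has to be produced by this sharper mechanism, not by the linear Gr\"onwall relation you state.
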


\begin{proof}
We proceed by induction. Assume that the estimates hold and  $\frac{K}{N}\lambda_k\leq\frac{1}{8}$ for all $k\leq m-1$. Then, by Theorem \ref{regularity}, $\rho_m^N$ is smooth.
Assume that $K$ satisfies
\[
K<\max\left\{\frac{1}{||\nabla^2(\Psi-2\log v_0)||_\infty},\frac{1}{||\nabla^2(\Psi-\log v_0)||_\infty}\right\}.
\]
It follows from Lemma \ref{est} that
\[
\begin{split}
&e^{nC||\nabla^2(\Psi-\log v_0)||_\infty}\sup(\rho_0v_0)\\
&\geq \left(1+\frac{K}{N}||\nabla^2(\Psi-\log v_0)||_\infty\right)^{nN}\sup(\rho_0v_0)\\
&\geq\rho^N_m v_0\geq \left(1-\frac{K}{N}||\nabla^2(\Psi-\log v_0)||_\infty\right)^{nN}\inf(\rho_0v_0)\\
&\geq e^{-nK||\nabla^2(\Psi-\log v_0)||_\infty}\inf(\rho_0v_0)>0
\end{split}
\]
and
\[
\begin{split}
||\nabla F_m^N||_\infty &\leq\frac{1}{\left(1-\frac{K}{N}||\nabla^2(\Psi-2\log v_0)||_\infty\right)^N}||\nabla F_0^N||_\infty\\
 &\leq e^{-K||\nabla^2(\Psi-2\log v_0)||_\infty}||\nabla F_0||_\infty.
\end{split}
\]
This proves the first two assertions.

By the above estimates and Lemma \ref{est}, there are positive constants $C$ and $h_0$ depending only on $\Psi$, $v_0$, and $\rho_0$ such that
\[
\begin{split}
&0\leq h^2C+h\lambda^N_{k-1}+(hC+2h\lambda^N_{k-1}-1)h\lambda^N_k+\Big(hC+h\lambda^N_{k-1}\Big)(h\lambda^N_k)^2
\end{split}
\]
for all $h<h_0$.

Assume that $K\leq \frac{1}{6C}$. It follows as in the proof of Theorem \ref{regularity} that
\[
\begin{split}
h\lambda_k^N&\leq\frac{2(h^2C+h\lambda_{k-1}^N)}{
1-hC-2h\lambda_{k-1}^N+\sqrt{1+h^2C^2-2hC-4h^3C^2-4h\lambda_{k-1}^N-4h^3C\lambda_{k-1}^N}}\\
&\leq\frac{2(h^2C+h\lambda_{k-1}^N)}{
1-hC-2h\lambda_{k-1}^N+\sqrt{1-3hC-4h\lambda_{k-1}^N}}\\
&\leq\frac{h^2C+h\lambda_{k-1}^N}{1-2hC-3h\lambda_{k-1}^N}.
\end{split}
\]
for all $k\leq m-1$. Note that $1-3hC-4h\lambda_{k-1}^N>0$ since $K\leq \frac{1}{6C}$ and $\frac{K}{N}\lambda_{k-1}=h\lambda_{k-1}\leq\frac{1}{8}$.

By assuming that the constant $C$ is large enough, we have
\[
\begin{split}
&\frac{ph^2C+\frac{h^2C+h\lambda_{k}^N}{1-2hC-3h\lambda_{k}^N}}{1-2phC-3p\frac{h^2C+h\lambda_{k}^N}{1-2hC-3h\lambda_{k}^N}}\\
&=\frac{ph^2C(1-2hC-3h\lambda^N_{k})+h^2C+h\lambda^N_{k}}{\left(1-2phC\right)(1-2hC-3h\lambda^N_{k})-3ph^2C-3ph\lambda^N_{k}}\\
&=\frac{(p+1)h^2C-2ph^3C^2-3ph^3C\lambda^N_{k}+h\lambda^N_{k}}{1-2\left(p+1\right)hC-3(p+1)h\lambda^N_{k}+4ph^2C^2+6ph^2C\lambda^N_{k}-3ph^2C}\\
&\leq\frac{(p+1)h^2C+h\lambda^N_{k}}{1-2\left(p+1\right)hC-3(p+1)h\lambda^N_{k}}.
\end{split}
\]

By iterating the two inequalities above, we obtain
\[
\begin{split}
\frac{K\lambda_m^N}{N}&\leq \frac{K}{N}\frac{\lambda_0+\frac{Km}{N}C}{1-\frac{2CKm}{N}-\frac{3Km\lambda_0}{N}}\\
&\leq \frac{K}{N}\frac{\lambda_0+KC}{1-2CK-3K\lambda_0},
\end{split}
\]
where $\lambda_0=\sup_{x\in M, |w|=1}\nabla^2F_0(x)(w,w)\leq C$ and $F_0=\log\rho_0-\log v_0+\Psi$.

Assume that the constant $K$ satisfies $\frac{K(\lambda_0+KC)}{1-2CK-3K\lambda_0}<\frac{1}{8}$ and $K<\frac{1}{2C+3\lambda_0}$. Then the last assertion follows.
\end{proof}

Finally, we finish the proof of Theorem \ref{main}. The arguments are mild modifications of the ones in \cite{JoKiOt,Ot,AmGiSa} combined with the estimates in Lemma \ref{limest}.

\begin{proof}
Let $\phi^N_t:\left[0,K\right]\times M\to\Real$ be defined by
\[
\phi^{N}_t=\rho_{k}^N
\]
if $t$ is in $\left[\frac{kK}{N},\frac{(k+1)K}{N}\right)$ and $k=0,...,N-1$.

By (\ref{JKOmin}),
\[
\begin{split}
&\frac{1}{2}\dis^2(\rho_{k}^N\mu,\rho_{k+1}^N \mu)+\frac{K}{N}\int_M F_{k+1}^Nd\mu\leq \frac{K}{N}\int_M F_k^N d\mu.
\end{split}
\]
Therefore,
\begin{equation}\label{smd}
\begin{split}
&\frac{1}{2}\sum_{k=0}^{N-1}\dis^2(\rho_k^N\mu,\rho_{k+1}^N \mu)\\
&\leq \frac{K}{N}\left(\int_M F_0^N \rho_0 d\mu-\inf\int_M(\log \rho-\log v_0+\Psi) \rho d\mu\right),
\end{split}
\end{equation}
where the infimum is taken over all $\rho:\mathbb T^n\to[0,\infty)$ satisfying
\[
\int_{\mathbb T^n}\rho\, d\mu=1.
\]

Therefore, we obtain
\[
\begin{split}
&\int_0^K\int_{\mathbb T^n}|\nabla\log\phi_t^N(x)-\nabla\log v_0+\nabla\Psi(x)|^2\phi_t^N(x)d\mu dt\\
&=\sum_{k=0}^{N-1}\dis^2(\rho_k\mu,\rho_{k+1}\mu)\leq C.
\end{split}
\]

Hence, $t\mapsto \sqrt{\int_{\mathbb T^n}|\nabla\log\phi_t^N(x)-\nabla\log v_0+\nabla\Psi(x)|^2\phi_t^N(x)d\mu}$ converges weakly in $L^2$ to a function $A:[0,K]\to\Real$. On the other hand, we have
\[
\begin{split}
&\dis(\phi_{\tau_0}^N\mu,\phi_{\tau_1}^N \mu)\leq \sum_{k=m_0}^{m_1-1}\dis(\rho_{k-1}^N\mu,\rho_k^N \mu)\\
&=\int_{\frac{(m_0-1)K}{N}}^{\frac{(m_1-1)K}{N}}\sqrt{\int_{\mathbb T^n}|\nabla\log\phi_t^N(x)-\nabla\log v_0+\nabla\Psi(x)|^2\phi_t^N(x)d\mu}\,dt,
\end{split}
\]
where $\frac{(m_i-1)K}{N}\leq\tau_i<\frac{m_iK}{N}$, $i=0, 1$.

By letting $N\to\infty$, we obtain
\begin{equation}\label{equi}
\begin{split}
&\liminf_{N\to\infty}\dis(\phi_{\tau_0}^N\mu,\phi_{\tau_1}^N \mu)\leq \int_{\tau_0}^{\tau_1}A(t) dt.
\end{split}
\end{equation}

Let $\mathcal D$ be a dense subset of $[0,K]$. By Lemma \ref{limest} and a diagonal argument, there is a subsequence $\phi_t^{N_k}$ which converges in $C^{0,\alpha}(\mathbb T^n)$ to a function $\phi_t$ for all $t$ in $\mathcal D$. By Lemma \ref{limest} and (\ref{equi}), we can extend $\{\phi_t|t\in\mathcal D\}$ to a unique curve $\{\phi_t|t\in[0,K]\}$ contained in $C^{0,\alpha}(\mathbb T^n)$ by continuity. Next, we show that $\phi_t^{N_k}$ converges to $\phi_t$ in $C^{0,\alpha}(\ntorus)$ for all $t$ in $[0,K]$. By Lemma \ref{limest}, it is enough to show that any convergence subsequence (say $\phi_t^{N_{k_m}}$) converges to $\phi_t$ in $C^{0,\alpha}(\ntorus)$. Suppose that the uniform limit of $\phi_t^{N_{k_m}}$ is $\tilde\phi_t$. Then, by (\ref{equi}),
\[
\begin{split}
\dis(\phi_s\mu,\tilde\phi_{t}\mu)= \lim_{m\to\infty}\dis(\phi^{N_{k_m}}_s\mu,\phi_{t}^{N_{k_m}}\mu)\leq \int_s^tA(\tau) d\tau
\end{split}
\]
for all $s$ in $\mathcal D$. It follows from this and the definition of $\phi_t$ that $\phi_t=\tilde \phi_t$. We also know that $\phi_t$ is Lipschitz in space.

Finally, if $\Phi_k^N(x)=x+\frac{K}{N}\nabla F_k^N(x)$ and $\xi$ is any function on $\Real\times\ntorus$ with compact support, then we have
\[
\begin{split}
&\left|\int_{\mathbb T^n}\xi_t(\rho_{k}^N-\rho_{k-1}^N)d\mu+\frac{K}{N}\int_{\mathbb T^n}\left<\nabla\xi_t,\nabla F_k^N\right>\rho_k^Nd\mu\right| \\
&\leq \frac{K^2}{2N^2}\int_{\mathbb T^n}\left|\left<\nabla^2\xi_t(\nabla F_k^N),\nabla F_k^N\right>\right|\rho_k^Nd\mu\\
&\leq \frac{1}{2}||\nabla^2\xi_t||_{C^0}\frac{K^2}{N^2}\int_{\mathbb T^n}\left|\nabla F_k^N\right|^2\rho_k^Nd\mu.
\end{split}
\]

On the other hand, by (\ref{d1}), (\ref{d2}), and (\ref{disest}), we have
\[
\begin{split}
&\int_M [-\di(X)+\left<X,\nabla(\Psi-2\log v_0)\right>-\left<\nabla F_k^N,X\right>] \rho_k d\mu=0.
\end{split}
\]
Therefore, by choosing $X=\nabla \xi_t$, we obtain
\[
\begin{split}
&\Big|\int_{\mathbb T^n}[-\Delta\xi_t+\left<\nabla\xi_t,\nabla(\Psi-2\log v_0)\right>]\rho_k^Nd\mu \\
&+\int_{\mathbb T^n}\frac{N}{K}\xi_t(\rho_{k}^N-\rho_{k-1}^N)d\mu\Big|\leq \frac{1}{2}||\nabla^2\xi||_{C^0}\frac{K}{N}\int_{\mathbb T^n}\left|\nabla F_k^N\right|^2\rho_k^Nd\mu.
\end{split}
\]

For each fix time $T$ in $[0,K]$, let $k_N$ be the integer satisfying $\frac{(k_N-1)K}{N}\leq T<\frac{k_NK}{N}$. By applying (\ref{smd}), we obtain
\[
\begin{split}
&\sum_{k=1}^{k_N}\int_{\mathbb T^n}\xi_{\frac{(k-1)K}{N}}\rho_{k}^N-\xi_{\frac{(k-1)K}{N}}\rho_{k-1}^Nd\mu\\
&\to\int_0^T\int_{\mathbb T^n}[\Delta\xi_t-\left<\nabla\xi_t,\nabla(\Psi-2\log v_0)\right>]\phi_td\mu\, dt
\end{split}
\]
as $N\to\infty$.

On the other hand, we have
\[
\begin{split}
&\sum_{k=1}^{k_N}\int_{\mathbb T^n}\xi_{\frac{(k-1)K}{N}}\rho_{k}^N-\xi_{\frac{(k-1)K}{N}}\rho_{k-1}^Nd\mu\\
&\to\int_{\mathbb T^n}\xi_T\phi_Td\mu-\int_{\mathbb T^n}\xi_0\phi_0d\mu-\int_0^T\int_{\mathbb T^n}\partial_t \xi_t \,\phi_td\mu\, dt
\end{split}
\]
as $N\to\infty$.

By combining the above discussions with (\ref{v0}), we see that $\phi$ is a weak solution (and hence the unique classical solution, see \cite{Li}) to the equation 
\begin{equation}\label{para}
\dot \phi=\Delta \phi+\left<\nabla\phi,\nabla\Psi\right>+f\phi
 \end{equation}
on $[0,K]\times\ntorus$. This gives the result for a short time. The result for long time follows from $C^2$ estimate for linear parabolic equations (see \cite{Li}).

Finally, it is not hard to see that the whole sequence $\phi_t^N$ converges to $\phi_t$ for each $t$. Indeed, suppose there is a time $t_0$ and a subsequence $\phi_{t_0}^{N_i}$ converging in $C^{0,\alpha}(\ntorus)$ to $\bar\phi_{t_0}$ which is different from $\phi_{t_0}$. By the same procedure as above, we can find a subsequence $\phi_{t}^{N_{i_k}}$ of $\phi_{t}^{N_i}$ which converges uniformly to a weak solution of (\ref{para}). By the uniqueness of solution, this limit is $\phi_t$. In particular, $\bar\phi_{t_0}$ and $\phi_{t_0}$ should be the same. 
\end{proof}

\smallskip


\begin{thebibliography}{200}
\bibitem{AmGi} L. Ambrosio, N. Gigli: A user's guide to optimal transport. Modelling and optimisation of flows on networks, 1–-155, Lecture Notes in Math., 2062, Springer, Heidelberg, 2013
\bibitem{AmGiSa} L. Ambrosio, N. Gigli, G. Savar\'e: Gradient flows in metric spaces and in the space of probability measures. Second edition. Lectures in Mathematics ETH Z\"urich. Birkh\"auser Verlag, Basel, 2008.
\bibitem{BlCa} A. Blanchet, G. Carlier: Optimal transport and Cournot-Nash Equilibria, to appear in Math. Op. Res., available at
https://www.ceremade.dauphine.fr/$\sim$ carlier/publis.html
\bibitem{Br} Y. Brenier: Polar factorization and monotone rearrangement of vector-valued functions, Comm. Pure Appl. Math. 44 (1991), 375--417.
\bibitem{CaSi} P. Cannarsa, C. Sinestrari: Semiconcave functions, Hamilton-Jacobi equations, and optimal control. Progress in Nonlinear Differential Equations and their Applications, 58. Birkh酳ser Boston, Inc., Boston, MA, 2004.
\bibitem{Co} D. Cordero-Erausquin: Sur le transport de mesures p\'eriodiques, C. R. Acad. Sci. Paris S\'er. I Math., vol. 329, no. 3, pp. 199-202, 1999
\bibitem{DeMeSaVe} G. De Philippis, A. M\'esz\'aros, F. Santambrogio, B. Velichkov: BV estimates in optimal transportation and applications, preprint available at http://cvgmt.sns.it/paper/2559/
\bibitem{EvGa} L.C. Evans, R.F. Gariepy: Measure theory and fine properties of functions. Studies in Advanced Mathematics. CRC Press, Boca Raton, FL, 1992.
\bibitem{GiTr} D. Gilbarg, N. Trudinger: Elliptic partial differential equations of second order. Reprint of the 1998 edition. Classics in Mathematics. Springer-Verlag, Berlin, 2001.
\bibitem{JoKiOt} R. Jordan, D. Kinderlehrer, F. Otto: The variational formulation of the Fokker-Planck equation. SIAM J. Math. Anal. 29 (1998), no. 1, 1--17.
\bibitem{Li} G.M. Lieberman: Second order parabolic differential equations. World Scientific Publishing Co., Inc., River Edge, NJ, 1996.
\bibitem{Mc} R.J. McCann: Polar factorization of maps on Riemannian manifolds. Geom. Funct. Anal. 11 (2001), no. 3, 589--608.
\bibitem{Ot} F. Otto: The geometry of dissipative evolution equations: the porous medium equation. Comm. Partial Differential Equations 26 (2001), no. 1-2, 101–-174.
\bibitem{Sa} F. Santambrogio: Transport and concentration problems with interaction effects. J. Global Optim. 38 (2007), no. 1, 129 - 141.
\bibitem{Vi1} C. Villani: Topics in optimal transportation. Graduate Studies in Mathematics, 58. American Mathematical Society, Providence, RI, 2003.
\bibitem{Vi2} C. Villani: Optimal transport. Old and new. Grundlehren der Mathematischen Wissenschaften [Fundamental Principles of Mathematical Sciences], 338. Springer-Verlag, Berlin, 2009
\end{thebibliography}
\end{document}